\renewcommand{\textsc}{\textcolor{red}}
\newtheorem{theorem}{\rm\bf Theorem}[section]
\newtheorem{lemma}[theorem]{\rm\bf Lemma}
\newtheorem{corollary}[theorem]{\rm\bf Corollary}
\newtheorem*{theorem 1}{\rm\bf Proposition 1}
\newtheorem*{theorem 2}{\rm\bf Proposition 2}
\theoremstyle{definition}
\theoremstyle{remark}
\def\interieur#1{\mathord{\mathop{\kern 0pt #1}\limits^\circ}}
\title[The arc and curve complex]{On the arc and curve complex of a surface}
\author{Mustafa Korkmaz}
\address{Mustafa Korkmaz, Department of Mathematics, Middle East Technical University, 06531 Ankara, Turkey.}
\email{korkmaz@metu.edu.tr}
\author{Athanase Papadopoulos}
\address{Athanase Papadopoulos,  Institut de Recherche Math{\'e}matique Avanc\'ee,
Universit{\'e} de Strasbourg and CNRS,
7 rue Ren\'e Descartes,
 67084 Strasbourg Cedex, France,  and
  Max-Planck-Institut f\"ur Mathematik, Vivatsgasse 7, 53111 Bonn, Germany.} \email{papadopoulos@math.u-strasbg.fr}
\date{\today}
\begin{document}

\begin{abstract}
We study the {\it arc and curve} complex $AC(S)$ of an oriented
connected surface $S$ of finite type with punctures. We show that
if the surface is not  a sphere with one, two or three punctures
nor a torus with one puncture, then the simplicial automorphism
group of $AC(S)$ coincides with the natural image of the extended
mapping class group of $S$ in that group. We also show that for
any vertex of $AC(S)$, the combinatorial structure of the link of
that vertex  characterizes the type of a curve or of
an arc  in $S$ that represents that vertex. We also give a proof
of the fact if $S$ is not a sphere with at most three punctures, then the natural embedding of the curve
complex of $S$ in $AC(S)$  is a quasi-isometry. The last result, at least under some slightly more restrictive conditions on $S$, was
already known. As a corollary, $AC(S)$ is Gromov-hyperbolic.
 \bigskip

\noindent AMS Mathematics Subject Classification:  Primary 32G15; Secondary
20F38, 30F10.

\medskip

\noindent Keywords: curve complex ; arc complex ; arc and curve complex ;
mapping class group.

\end{abstract}
\maketitle

\section{Introduction}
\label{intro}

In this paper, $S=S_{g,n}$ is a connected orientable surface of genus $g\geq 0$
without boundary and with $n\geq 0$ punctures.  The {\it mapping class group} of
$S$,
denoted by $\mathrm{Mod}(S)$, is the group of isotopy classes of
orientation-preserving
homeomorphisms of $S$.  The {\it extended mapping class group} of $S$,
$\mathrm{Mod}^*(S)$, is the group of isotopy classes
of all homeomorphisms of $S$.

We shall denote by $\overline{S}$ the closed surface obtained by filling in
the
punctures of $S$, and by $B$ the set of punctures when these points are
considered as elements of $\overline{S}$. The elements of $B$
are also called
{\it distinguished points} of  $\overline{S}$.

A simple closed curve on $S$ or on $\overline{S}$ is said to be {\it essential}
if it is  not homotopic to a point or to a puncture (or a distinguished point).

An {\it arc} in $S$ or in  $\overline{S}$  is the homeomorphic image of a closed
interval in $\overline{S}$
whose interior is in  $\overline{S}\setminus B$ and whose endpoints are in $B$.
An arc is said to be {\it essential} if it is not homotopic (relative to $B$) to
a point in $\overline{S}$.

 When the setting is clear, we shall use the term ``curve"
 instead of ``essential simple closed curve" and the term ``arc" instead of
 ``essential arc".

The {\it curve complex} of $S$, denoted by $C(S)$, is the abstract simplicial complex
whose $k$-simplices, for each $k\geq 0$, are the sets of $k+1$ distinct isotopy classes
of curves in $S$ that can be represented by pairwise disjoint
curves on this surface. The curve complex has been introduced by Harvey \cite{harvey},
and since then it has been investigated from various points of view (see in particular
the papers \cite{ivanov}, \cite{korkmaz}, \cite{luo}  and \cite{MM}).
We shall use below an alternative definition of $C(S_{g,n})$ in the case $(g,n)=(0,4)$ or $(g,n)=(1,1)$.

The {\it arc complex} of $S$, denoted by $A(S)$, is defined analogously, with  curves
replaced by arcs. The arc complex has also been studied by several authors,
see for instance \cite{Hatcher}, \cite{IM}, \cite{ivanov} and \cite{korkmaz}.

In this paper, we study the  {\it arc and curve complex}, $AC(S)$,
an abstract simplicial complex in which the curve complex and the arc complex
naturally embed.
 The arc and curve complex is the abstract simplicial complex whose $k$-simplices,
 for each $k\geq 0$, are collections of $k+1$ distinct isotopy classes of
 one-dimensional submanifolds which can be either essential simple closed
 curves or essential arcs in $S$, such that this collection can be
 represented by disjoint curves or arcs on the surface. The arc and curve complex
 was studied by Hatcher in \cite{Hatcher}, who proved that this complex is
 contractible.

Note that if $n=0$, then there are no arcs on
$S$, and in that case $AC(S_{g,0})=C(S_{g,0})$.  From now on, we assume that
$n\geq 1$.

Each element of the extended mapping class group $\mathrm{Mod}^*(S)$
acts naturally in a simplicial way on the complex $AC(S)$, and  its is clear
that the resulting map from $\mathrm{Mod}^*(S)$ to the
simplicial automorphism group  $\mathrm{Aut}(AC(S))$ of $AC(S)$ is a homomorphism.

There are natural simplicial maps from the curve complex $C(S)$ and
from the arc complex $A(S)$ into
the arc and curve complex, which extend the natural inclusions at the
level of the vertices.

In this paper, we prove the following:

\begin{theorem}\label{th:auto}
If the surface $S_{g,n}$ is not a sphere with one, two or
three punctures nor a torus with one puncture, then
the natural homomorphism $\mathrm{Mod}^*(S_{g,n})\to \mathrm{Aut}(AC(S_{g,n}))$
is an isomorphism.
\end{theorem}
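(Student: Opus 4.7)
My approach follows the pattern successfully used for the curve complex by Ivanov, Korkmaz and Luo, and for the arc complex by Irmak--McCarthy and by the present authors: I would reduce Theorem \ref{th:auto} to the known isomorphisms $\mathrm{Aut}(C(S))\cong\mathrm{Mod}^*(S)$ and $\mathrm{Aut}(A(S))\cong\mathrm{Mod}^*(S)$ (valid for the surfaces under consideration, with the alternative definitions for $(g,n)=(0,4)$ and $(1,1)$ flagged in the introduction), using the link-characterization theorem announced in the abstract as the bridge that separates, inside $AC(S)$, the vertices coming from arcs from those coming from curves.

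\textbf{Step 1: Injectivity.} An element $f$ in the kernel of the natural map $\mathrm{Mod}^*(S)\to\mathrm{Aut}(AC(S))$ fixes the isotopy class of every essential simple closed curve and of every essential arc. For the surfaces outside the excluded list, fixing all curves up to isotopy already leaves essentially only the identity and, in a handful of small cases, a hyperelliptic-type involution; but any such involution exchanges a pair of distinct essential arcs joining two of its fixed punctures, so the requirement of fixing also every arc collapses the kernel to the identity.

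\textbf{Step 2: Surjectivity.} Let $\varphi\in\mathrm{Aut}(AC(S))$. By the link-characterization theorem, the combinatorial type of the link of a vertex $v$ of $AC(S)$ detects whether $v$ is represented by a curve or by an arc. Consequently, $\varphi$ preserves the vertex subset of $C(S)$ and the vertex subset of $A(S)$; since disjointness is encoded by edges, these restrictions give simplicial automorphisms $\varphi_C\in\mathrm{Aut}(C(S))$ and $\varphi_A\in\mathrm{Aut}(A(S))$. By the curve-complex rigidity theorem, $\varphi_C$ is induced by some $h\in\mathrm{Mod}^*(S)$; after composing $\varphi$ with the action of $h^{-1}$, we may assume $\varphi$ fixes every curve-vertex of $AC(S)$. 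Applying the arc-complex rigidity theorem to the resulting $\varphi_A$ then produces $g\in\mathrm{Mod}^*(S)$ inducing $\varphi$ on $A(S)$. Compatibility of $\varphi$ with the mixed edges of $AC(S)$ (a curve-vertex and an arc-vertex form an edge iff the corresponding classes have disjoint representatives), together with the elementary fact that an essential curve is determined by the set of essential arcs disjoint from it, forces $g$ to fix every curve as well; injectivity of $\mathrm{Mod}^*(S)\to\mathrm{Aut}(C(S))$ in our range then gives $g=1$, so $\varphi$ was induced by $h$ to begin with.

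\textbf{Main obstacle.} The substantive input is the link-characterization theorem: without a purely combinatorial way of distinguishing arc-vertices from curve-vertices, the reduction above cannot even be started. The delicate work is to verify that characterization uniformly across all non-excluded surfaces, and especially in the borderline low-complexity cases nearest to the excluded list, where the links of vertices are smallest and the arc-link versus curve-link distinction is most subtle. Once that characterization is in hand, the argument above is essentially a bookkeeping of the two existing rigidity theorems.
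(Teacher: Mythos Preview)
Your overall strategy---restrict an automorphism of $AC(S)$ to the arc and curve subcomplexes and invoke the known rigidity theorems---is the paper's strategy too. But two points of your proposal diverge from what the paper actually does, and one of them leaves a genuine gap.

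First, you identify the link-characterization theorem (Theorem~\ref{th:distinct}) as the ``substantive input'' needed to separate arc-vertices from curve-vertices. The paper does \emph{not} use that theorem here; it uses a far simpler observation (Lemma~\ref{lemma:max}): a maximal simplex of \emph{maximal} dimension $6g+3n-7$ in $AC(S)$ consists entirely of arcs, so any simplicial automorphism must take arcs to arcs and hence curves to curves. The full link-characterization, distinguishing five topological types of vertices, is proved later and independently; the paper remarks explicitly that it is a local result not following from Theorem~\ref{th:auto}. So the ``main obstacle'' you flag is in fact not an obstacle.

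Second, the paper gives two self-contained proofs, each invoking only \emph{one} rigidity theorem: the first restricts to $A(S)$ and uses Irmak--McCarthy, the second restricts to $C(S)$ and uses Ivanov--Korkmaz--Luo. In each case one shows directly that the restriction map $\Psi$ from $\mathrm{Aut}(AC(S))$ is injective---e.g.\ an automorphism fixing every arc fixes every curve, because for each curve $b$ one exhibits a system of arcs (and, for separating $b$, previously-handled nonseparating curves) for which $b$ is the unique curve disjoint from all of them. Your Step~2 instead uses \emph{both} rigidity theorems in sequence. This is not only redundant, it introduces a gap: the curve-complex rigidity theorem (Theorem~\ref{thm:complex}) as stated requires $2g+n\ge 5$ and so does not cover $S_{0,4}$ or $S_{1,2}$, both of which lie in the range of Theorem~\ref{th:auto}. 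The paper's second proof notes this explicitly and defers those two surfaces to the first proof; your argument, which starts from $\varphi_C$, does not cover them.
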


The analogous result for the curve complex is due to Ivanov, Korkmaz and Luo
(\cite{ivanov}, \cite{korkmaz}, and \cite{luo}), Theorem \ref{thm:complex} below.
The analogous
result for the arc complex has been recently obtained by Irmak and
McCarthy~(\cite{IM}), Theorem~\ref{thm:IM} below.

We shall give two different proofs of Theorem~\ref{th:auto}. The first one involves
passing to the arc complex, and it uses the result by Irmak and McCarthy.
The second proof involves passing to the
curve complex, and it is based on the result of  Ivanov, Korkmaz and Luo.
We note that the proof of Irmak and McCarthy's result on the
automorphisms of the arc complex does
not use the result by  Ivanov, Korkmaz and Luo.

We then show the following:

\begin{theorem}\label{th:distinct}
For any surface $S$, the combinatorics of the link at each vertex of $AC(S)$ is
sufficient to characterize the type of curve or arc that represents such a vertex.
 \end{theorem}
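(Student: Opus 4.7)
My strategy is to interpret the link of a vertex $v$ of $AC(S)$ as a combinatorial invariant of the cut surface $S_v$ obtained by cutting $S$ along a representative of $v$, and then to use Theorem~\ref{th:auto} (together with the analogous theorem of Ivanov, Korkmaz and Luo for the curve complex) to recover $S_v$, and hence the type of $v$, from the combinatorics of the link.

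First I would compute $S_v$ in each case. If $v$ is a non-separating simple closed curve, then $S_v\cong S_{g-1,n+2}$. If $v$ is a separating simple closed curve of splitting type $(g_1,n_1)+(g_2,n_2)=(g,n)$, then $S_v\cong S_{g_1,n_1+1}\sqcup S_{g_2,n_2+1}$. If $v$ is an arc joining two distinct punctures, then $S_v\cong S_{g,n-1}$. If $v$ is a non-separating arc with both endpoints at a single puncture, then $S_v\cong S_{g-1,n+1}$. Finally, if $v$ is a separating arc at a single puncture, then $S_v$ is a disjoint union of two surfaces with total genus $g$ and $n+1$ total punctures whose topology records the splitting. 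A quick comparison of three invariants --- the total number of punctures of $S_v$, the number of its connected components, and the splitting type in the two separating cases --- shows that these five families of cut surfaces are pairwise disjoint, so the homeomorphism type of $S_v$ is a complete invariant of the type of $v$.

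Next I would identify the link of $v$ combinatorially. In the three arc cases, every new boundary of $S_v$ arises from punctures of $S$ (either two merged into one, or one split into two), so every puncture of $S_v$ is a valid endpoint for arcs in $S$, and the link of $v$ is canonically isomorphic to $AC(S_v)$ (which for disconnected $S_v$ is the simplicial join of the $AC$-complexes of its components). In the two curve cases, the two new boundary components of $S_v$ are not punctures of $S$, so the link is isomorphic to the subcomplex of $AC(S_v)$ obtained by forbidding arcs with an endpoint on either of these two distinguished boundaries. The homeomorphism type of $S_v$ is then recovered from the link by applying Theorem~\ref{th:auto} to each component of $S_v$ in the arc cases; in the curve cases, where only a subcomplex of $AC(S_v)$ is available, it is recovered instead from the full curve subcomplex $C(S_v)$ sitting inside the link, using the theorem of Ivanov, Korkmaz and Luo. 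Combined with the first step, this reads off the type of $v$ from the link.

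The main obstacle is treating the small exceptional surfaces excluded from Theorem~\ref{th:auto} --- the spheres with one, two, or three punctures and the once-punctured torus --- which can appear as components of $S_v$ even when $S$ itself is large (for instance, a separating arc may cut off a once- or twice-punctured disk). These cases have to be dispatched by direct inspection of the link, exploiting the small size and rigidity of the corresponding $AC$-complexes. A second subtle point is to characterize the simple-closed-curve vertices of the link in purely combinatorial terms, without a circular appeal to Theorem~\ref{th:distinct} itself; this requires a local invariant distinguishing curve vertices from arc vertices, such as the dimension of the link of the vertex in question or the number of top-dimensional simplices of $AC(S)$ that contain it.
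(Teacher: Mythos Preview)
Your overall architecture --- reconstruct the cut surface $S_v$ from the link and read off the type --- is a reasonable idea, but the key step is not justified by the theorem you cite. Theorem~\ref{th:auto} asserts that every \emph{automorphism} of $AC(S)$ for a fixed $S$ is geometric; it says nothing about recovering $S$ from the abstract isomorphism type of $AC(S)$, nor about isomorphisms between $AC$-complexes of possibly different surfaces. The same objection applies to your appeal to Ivanov--Korkmaz--Luo for the curve case. What you actually need is the statement ``$AC(S_1)\cong AC(S_2)$ (resp.\ $C(S_1)\cong C(S_2)$) implies $S_1\cong S_2$,'' and that is a separate fact. It can be obtained cheaply from Lemmas~\ref{lemma:max} and~\ref{lemma:min}, which give you $6g+3n-7$ and $3g+2n-4$ as the top and bottom dimensions of maximal simplices and hence determine $(g,n)$; but then Theorem~\ref{th:auto} plays no role, and the paper's remark that Theorem~\ref{th:distinct} ``does not immediately follow from Theorem~\ref{th:auto}'' is precisely warning against the move you make.

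Once that gap is patched with the dimension lemmas, your argument essentially collapses into the paper's, which is much more direct. The paper never computes $S_v$; it simply reads off three elementary invariants of the link: whether the link is a nontrivial join (separating versus non-separating), the top dimension of simplices through $v$ (this is $6g+3n-7$ for arcs and $6g+3n-8$ for curves, distinguishing arcs from curves), and, to separate type~(\ref{c4}) from type~(\ref{c5}), the existence of a separating-curve vertex $z$ in $\mathrm{Lk}(v)$ with $\mathrm{St}(z)\subset\mathrm{St}(v)$. Your two admitted ``obstacles'' --- handling small exceptional pieces and distinguishing curve vertices from arc vertices without circularity --- are exactly where the real content lies, and your proposed fix for the second (look at top-simplex dimensions) is precisely the paper's argument. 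So the detour through reconstructing $S_v$ adds complexity without buying anything.
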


We shall be more precise about the meaning of
Theorem~\ref{th:distinct}. Note that it is a  local result,
which does not immediately follow from Theorem~\ref{th:auto}.

If $(g,n)=(0,4)$ or $(g,n)=(1,1)$, the curve complex $C(S)$ is infinite discrete in the above
definition. For Theorem~\ref{th:quasi} below, we change the definition of
$C(S)$ in these two cases as follows:
The complex $C(S)$ is the graph whose vertices are isotopy classes of simple closed
curves and an edge is placed between two vertices if two representatives of the
vertices have minimal intersection number, which is $1$ if $g=1$ and $2$ if $g=0$.

Here and throughout the paper, we shall consider geometric realization of the simplicial complexes $AC(S)$ and $C(S)$, and not only these complexes as abstract simplicial complexes. In particular, we make these complexes into complete geodesic
metric spaces in a natural way by making each simplex a regular
Euclidean simplex of sidelength $1$, as in \cite{MM}.
We call these metrics on  $AC(S)$ and $A(S)$ the {\it simplicial metrics} on these spaces.
In the last section of the paper, we give a proof of the following result:

\begin{theorem}\label{th:quasi}
If the surface $S_{g,n}$ is not a sphere with at most three punctures,
then the natural map
$C(S_{g,n})\to AC(S_{g,n})$ is a quasi-isometry. Furthermore, each point in
$ AC(S_{g,n})$ is at distance at most one from a point in the
image of $C(S_{g,n})$.
\end{theorem}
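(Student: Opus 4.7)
The plan is to construct a coarse inverse $r\colon AC(S)^{(0)} \to C(S)^{(0)}$ to the natural inclusion $i\colon C(S)\hookrightarrow AC(S)$, show that $r$ is coarsely Lipschitz while $i$ is $1$-Lipschitz with $r\circ i=\mathrm{id}$, and from this derive both the quasi-isometry and the coarse-density statement. The guiding geometric idea is that each essential arc $\alpha$ on $S$ determines a naturally associated essential simple closed curve $r(\alpha)$ that is disjoint from $\alpha$, obtained as a component of the boundary of a tubular neighborhood of $\alpha$ together with its two endpoints in $\overline S$.

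To define $r$, for an arc vertex $\alpha$ with endpoints $p_1, p_2 \in B$ (possibly $p_1=p_2$), let $N$ be a closed regular neighborhood of $\alpha \cup \{p_1,p_2\}$ in $\overline S$; then $\partial N$, viewed in $S$, is a disjoint union of one or two simple closed curves. A case analysis on the two endpoint possibilities and on the topology of $\overline S \setminus N$, using the hypothesis that $S$ is not a sphere with at most three punctures, shows that at least one component of $\partial N$ is essential in $S$. Set $r(\alpha)$ to be such a component, and $r(c)=c$ on curve vertices. Because $r(\alpha)$ is disjoint from $\alpha$, the pair $\{\alpha, r(\alpha)\}$ spans a simplex in $AC(S)$, so $d_{AC(S)}(\alpha, r(\alpha)) \le 1$.

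For the Lipschitz bound on $r$, suppose $u,v$ are vertices of $AC(S)$ with disjoint representatives. If both are curves, $d_{C(S)}(r(u),r(v)) \le 1$; if one is a curve and the other an arc, then the curve can be isotoped off the neighborhood defining $r$ of the arc, again yielding distance at most $1$ in $C(S)$. When both are arcs, form a regular neighborhood of $u \cup v$ together with their (at most four) endpoints in $\overline S$, and extract from its boundary an essential simple closed curve $\gamma$ in $S$ that is disjoint from both $r(u)$ and $r(v)$; this gives $d_{C(S)}(r(u), r(v)) \le 2$. Combined with the $1$-Lipschitz property of $i$ and the identity $r\circ i=\mathrm{id}$, this shows $i$ is a quasi-isometric embedding. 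The coarse-density statement follows: any point $p\in AC(S)$ lies in a closed simplex $\sigma$; if $\sigma$ already contains a curve vertex $c$ then $d_{AC(S)}(p,c)\le 1$ since the diameter of a simplex is $1$, and otherwise the essential boundary curve of a regular neighborhood of the union of the arcs of $\sigma$ (together with their endpoints) can be adjoined to $\sigma$, producing a larger simplex containing both $p$ and a curve vertex.

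The principal obstacle is verifying that the boundary curves of the various tubular neighborhoods produced above always contain an essential component. This amounts to excluding the possibility that all complementary pieces of the neighborhood in $\overline S$ are disks, once-punctured disks, or annuli, which is precisely what the hypothesis ``not a sphere with at most three punctures'' is designed to prevent. Extra care is needed in the borderline cases $(g,n)=(0,4)$ and $(1,1)$, where the modified Farey-style definition of $C(S)$ is in force: there one must replace ``disjoint curves'' by ``minimally intersecting curves'' and verify that the curves $r(u), r(v)$ attached to disjoint arcs $u,v$ can be arranged to meet in the minimal number of points, so that the estimates above transfer to edge-distance estimates in the modified graph.
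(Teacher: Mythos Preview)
Your retraction $r$ is exactly the construction the paper uses: it sends each arc to an essential boundary component of a regular neighbourhood, and then replaces arc vertices along an $AC$--geodesic by their images under $r$.  The paper carries this out as a path--surgery (its nine--case figure for two consecutive arc vertices is precisely the verification that $d_{C}(r(u),r(v))\le 2$ when $u,v$ are disjoint arcs), while you package the same computation as ``$r$ is $2$--Lipschitz''; the resulting inequalities $\tfrac12\,d_C\le d_{AC}\le d_C$ in the generic range coincide.  The paper also treats the two exceptional surfaces $(g,n)=(1,1)$ and $(0,4)$ by separate hands--on arguments, obtaining sharper additive estimates; you correctly flag that these cases need special handling but do not carry it out, so that part remains a sketch rather than a proof.

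There is one genuine gap in your coarse--density paragraph.  When the simplex $\sigma$ containing $p$ happens to consist entirely of arcs forming an ideal triangulation of $S$ (these are exactly the top--dimensional simplices of $AC(S)$, of dimension $6g+3n-7$), the complement of a regular neighbourhood of the union of its arcs is a disjoint union of triangles, so \emph{every} boundary component of that neighbourhood is inessential and there is no curve that can be adjoined to $\sigma$.  Thus your sentence ``otherwise the essential boundary curve of a regular neighbourhood of the union of the arcs of $\sigma$ \ldots\ can be adjoined to $\sigma$'' fails in that case.  The easy fix is to use a \emph{single} arc vertex $a\in\sigma$ instead of the whole union: then $r(a)$ is a curve vertex with $d_{AC}(p,r(a))\le d_{AC}(p,a)+d_{AC}(a,r(a))\le 1+1=2$, which is all that is needed for the quasi--isometry.  (The paper's own proof of the ``distance at most one'' clause is argued only at the level of arc \emph{vertices}, for which it is immediate.)
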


For the cases where $(g,n)=(0,4)$ or $(1,1)$, a special discussion is needed. The result given
in Theorem~\ref{th:quasi}, with slighty less general conditions on the topological type of the
surface $S_{g,n}$,
is not new; it is mentioned in Schleimer's notes \cite{Schleimer}. Although
the result is independent from the rest of the paper, we think it is useful
to provide a complete proof of it.

The complex $AC(S)$ is finite if $S$ is a sphere with at most three punctures.
As a consequence of Theorem~\ref{th:quasi}, and the fact that the curve complex is
Gromov-hyperbolic (see \cite{MM}), we have the following corollary:

\begin{corollary} The arc and curve complex $AC(S_{g,n})$ of any  surface  $S_{g,n}$ is Gromov-hyperbolic.
\end{corollary}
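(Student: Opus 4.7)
The plan is to deduce the corollary almost immediately from Theorem~\ref{th:quasi}, together with the Masur--Minsky hyperbolicity theorem for the curve complex. First I would dispose of the trivial cases. If $S_{g,n}$ is a sphere with at most three punctures, then, as the paper remarks, the complex $AC(S_{g,n})$ is finite, hence of bounded diameter in the simplicial metric; any bounded metric space is trivially Gromov-hyperbolic, so there is nothing to prove in these cases.

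For the remaining surfaces, I would invoke the standard fact that Gromov-hyperbolicity is a quasi-isometry invariant within the class of geodesic metric spaces. By construction, $AC(S_{g,n})$ with the simplicial metric defined in the paper (each simplex a regular Euclidean simplex of sidelength $1$) is a complete geodesic metric space, and the same holds for $C(S_{g,n})$. Theorem~\ref{th:quasi} supplies a quasi-isometry $C(S_{g,n})\to AC(S_{g,n})$ between these two geodesic metric spaces, so the Gromov-hyperbolicity of $AC(S_{g,n})$ is equivalent to that of $C(S_{g,n})$.

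The hyperbolicity of $C(S_{g,n})$ is precisely the main result of Masur and Minsky~\cite{MM}; it applies both to the standard definition of the curve complex and, in the exceptional cases $(g,n)=(0,4)$ and $(g,n)=(1,1)$, to the Farey-graph variant used in the statement of Theorem~\ref{th:quasi}, which is well known to be $\delta$-hyperbolic. Combining Theorem~\ref{th:quasi} with this hyperbolicity of $C(S_{g,n})$ yields the corollary in all remaining cases.

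There is no real obstacle here: the content is concentrated entirely in Theorem~\ref{th:quasi} and in \cite{MM}, and the corollary is essentially a one-line consequence of quasi-isometry invariance of Gromov-hyperbolicity once the trivial bounded cases are separated out.
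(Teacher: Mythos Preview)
Your argument is correct and follows the paper's own reasoning essentially verbatim: deduce hyperbolicity from Theorem~\ref{th:quasi} together with Masur--Minsky, and handle the sphere with at most three punctures separately via finiteness. There is nothing to add.
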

 
 Note that the condition that  $S_{g,n}$ is not a surface with at most three punctures is not excluded in this corollary, because finite metric spaces are always Gromov-hyperbolic.

\section{Maximal simplices in $AC(S)$}

The results of this section will be used in the proof of Theorem  \ref{th:auto}.

A simplex in $AC(S)$ is said to be {\it maximal} if it is maximal in the sense of
inclusion.

Let $\Delta$ be a maximal simplex in $AC(S)$, and suppose that the vertices of $\Delta$
are isotopy classes of arcs.
Then a set of disjoint representatives of $\Delta$ is an {\it  ideal
triangulation} of the closed surface $\overline{S}$, that is, a triangulation having all
its vertices at the distinguished points. We shall also call $\Delta$ an {\it  ideal
triangulation} of $S$.

\begin{figure}[hbt]
 \begin{center}
    \includegraphics[width=12cm]{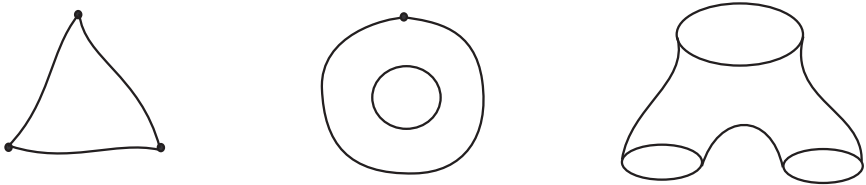}
  \caption{Possible components of the complement of a maximal simplex in the complex
  $AC(S)$.}
  \label{complement}
   \end{center}
 \end{figure}

Let us take again a maximal simplex $\Delta$ in $AC(S)$ and let
$\delta$ be a set of representatives of elements of $\Delta$ that
are pairwise disjoint. Each component of the surface $S_\delta$
obtained by cutting $S$ along $\delta$ is of one of the following
three forms: (1) a triangle; (2) an annulus with one of its
boundary components being a simple closed curve in $\delta$ and
the other boundary component being an arc joining a puncture to
itself; (3) a pair of pants all of whose boundary components are
simple closed curves (see Figure~\ref{complement}).

We shall use the following:

\begin{lemma} \label{lemma:max}
A maximal simplex of $\Delta$ in $AC(S)$ that has maximal dimension consists of
arcs, i.e. it is an ideal triangulation of $S$. The dimension of such a simplex is
$6g+3n-7$.
\end{lemma}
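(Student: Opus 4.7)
The plan is to convert a maximal simplex into a small linear system relating the numbers of arcs, curves and complementary regions of each type, and then read off the dimension.

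Setup: let $\Delta$ be a maximal simplex in $AC(S_{g,n})$ realized by a pairwise disjoint collection $\delta$ of essential arcs and curves. Write $a=\#\{\text{arcs}\}$ and $c=\#\{\text{curves}\}$, so $\dim\Delta = a+c-1$. By the trichotomy already established just before the lemma, the components of $S\setminus\delta$ split into $t$ triangles, $m$ type-2 annuli, and $p$ pairs of pants.

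First I would derive three linear relations among $a,c,t,m,p$. Counting arc-sides (each arc has two sides; each triangle uses three, each annulus one) gives $2a = 3t + m$. Counting curve-sides (each curve has two sides; each annulus uses one, each pair of pants three) gives $2c = m + 3p$. For the third relation I would compute $\chi(\bar S)$ from a CW structure built as follows: take the $n$ punctures as initial $0$-cells, add one auxiliary $0$-cell on each curve, declare the arcs and curves to be $1$-cells, cut each type-2 annulus into a single $2$-cell by adding one edge from its puncture to its curve-vertex, and cut each pair of pants into a single $2$-cell by adding two edges joining its three curve-vertices. A direct count then yields
\[
2-2g \;=\; \chi(\bar S) \;=\; (n+c) - (a + c + m + 2p) + (t + m + p) \;=\; n - a + t - p.
\]

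Eliminating $t$ and $m$ from the three relations should collapse them to the clean identity $a + 2c = 3n + 6g - 6$, whence
\[
\dim\Delta \;=\; a + c - 1 \;=\; (3n + 6g - 7) - c.
\]
Hence $\dim\Delta \le 3n + 6g - 7$, with equality if and only if $c=0$; in that case the relation $m + 3p = 2c = 0$ forces $m=p=0$, so every complementary region is a triangle and $\delta$ is an ideal triangulation. The only non-mechanical step is verifying the CW decomposition used in the Euler characteristic calculation — in particular that a type-2 annulus can indeed be opened into a single disk by one extra edge and a pair of pants by two — but this is standard and should present no real obstacle.
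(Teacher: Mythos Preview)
Your argument is correct. The three counting relations are right (in particular, a type-(2) annulus contributes exactly one arc-side and one curve-side), and the CW decomposition of $\overline{S}$ is legitimate: a type-(2) annulus with one marked point on each boundary circle does become a single $2$-cell after one transverse edge, and a pair of pants with one marked point on each boundary becomes a single $2$-cell after two seam edges. The elimination step is clean and yields the useful formula
\[
\dim\Delta \;=\; 6g+3n-7-c
\]
for every maximal simplex, from which the lemma follows immediately.

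Your route is genuinely different from the paper's. The paper argues geometrically: if a maximal simplex contains a curve, one can replace that curve by two disjoint arcs to produce a strictly larger simplex (this is the content of Figure~\ref{increase}); hence a maximal-dimension maximal simplex contains no curves, is an ideal triangulation, and a standard Euler-characteristic count of edges in an ideal triangulation gives $6g+3n-6$ arcs. The paper's argument is shorter and makes the ``curve costs one dimension'' phenomenon visible as an explicit surgery. Your argument is more bookkeeping-heavy but yields more: the identity $\dim\Delta = 6g+3n-7-c$ holds for \emph{every} maximal simplex, so once one knows that the maximal number of curves is $3g+n-3$, your formula also proves Lemma~\ref{lemma:min} and the remark that every intermediate dimension is realized, without further work.
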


 \begin{figure}[hbt]
 \begin{center}
    \includegraphics[width=8cm]{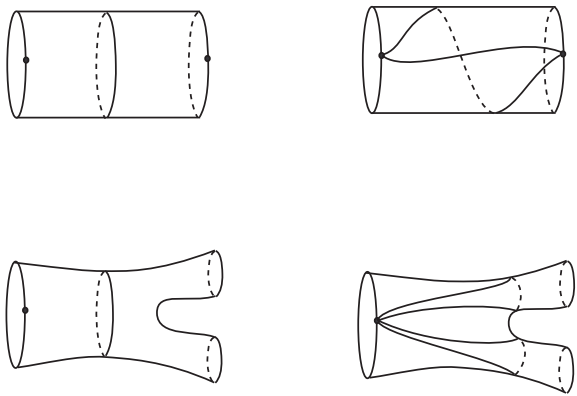}
  \caption{The dimension of the maximal simplex is increased in the presence of
  a curve.}
  \label{increase}
   \end{center}
 \end{figure}

 \begin{proof}  The first statement is a consequence of the fact that
 if we start with a maximal simplex in $AC(S)$
having at least one vertex that is the isotopy class of a simple closed curve,
 then we can obtain a maximal simplex of one dimension higher by replacing
 one of these simple closed curves by two arcs (see Figure~\ref{increase}).
 Thus, a maximal simplex $\Delta$ having maximal dimension
 consists only of isotopy classes of arcs,
 and it is therefore the isotopy class of an ideal triangulation of $S$.
 The dimension statement follows then by a standard Euler characteristic
 argument, which gives the number of curves in an ideal triangulation of $S$.
 This number is $6g+3n-6=-3\chi(S)$, which implies that the dimension of
 the maximal simplex $\Delta$ is $6g+3n-7$.
 \end{proof}

 \begin{lemma} \label{lemma:min}
The dimension of a maximal simplex $\Delta$ in $AC(S)$ that has minimal dimension
is $3g+2n-4$.
\end{lemma}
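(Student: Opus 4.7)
Plan:

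My approach is to combine a piecewise count of arcs and curves with an Euler characteristic identity, then solve a small linear inequality to find the minimum, which I would realize by an explicit pants-like decomposition.

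Let $\Delta$ be a maximal simplex with disjoint representative $\delta$, let $a, c$ be the numbers of arcs and curves in $\delta$, and $T, A, P$ the numbers of components of $S_\delta$ of the three types described above (triangle, type-(2) annulus, type-(3) pair of pants). Double-counting the sides of each arc and of each curve yields the relations $2a = 3T + A$ and $2c = A + 3P$. An Euler characteristic computation on a CW structure of $\overline{S}$ (with the $n$ punctures and one extra vertex per curve as $0$-cells; the arcs and curves as $1$-cells; and the $2$-cells obtained from the components after inserting one auxiliary arc inside each type-(2) annulus and two auxiliary arcs inside each type-(3) pants, so that every face becomes a disk) then gives $n - a + T - P = 2 - 2g$. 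Eliminating $A$ among these three identities produces the convenient formula
\[
a + c \;=\; 2n + 4g - 4 + \tfrac{1}{2}(T - P),
\]
so minimizing $\dim\Delta = a + c - 1$ amounts to minimizing $T - P$.

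Next I would impose the puncture constraint. Each puncture $p$ must lie on the boundary of at least one piece (a triangle or a type-(2) annulus), and any such non-degenerate incidence forces at least two arc-ends at $p$; summing $k_p \geq 2$ over the $n$ punctures and recognising $\sum_p k_p = 3T + A$ gives $3T + A \geq 2n$. Substituting the expression $A = 2n + 4g - 4 - T - 2P$ obtained from the previous step simplifies this to $P \leq T + 2g - 2$. Combined with $T, P \geq 0$ this forces $T - P \geq 2 - 2g$ when $g \geq 1$, and $T - P \geq 2$ when $g = 0$ (where $P \geq 0$ additionally requires $T \geq 2$). In either case, substituting back yields $a + c \geq 2n + 3g - 3$, hence $\dim \Delta \geq 3g + 2n - 4$.

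Finally I would exhibit a maximal simplex attaining this bound. For $g \geq 1$ the idea is to choose a pants decomposition of $\overline{S}$ in which each puncture $p_i$ lies in a ``cusp pair of pants'' whose three boundary elements are two essential simple closed curves together with $p_i$, and then to cut each such cusp pants by a single arc running from $p_i$ to itself that separates its two curve boundaries; each cusp pants becomes exactly two type-(2) annuli, giving $(T, A, P) = (0, 2n, 2g-2)$ and saturating the bound. For $g = 0$ an analogous construction with two triangles playing the role of the missing type-(3) pants works. The step I expect to be the main obstacle is rigorously justifying the constraint $k_p \geq 2$, which requires excluding the degenerate configuration of a maximal simplex having a puncture with only one arc-end (ruled out using maximality together with the piece classification), together with verifying that each piece in the extremal decomposition admits no further disjoint essential arc or curve up to isotopy, which is what ensures the constructed simplex is genuinely maximal.
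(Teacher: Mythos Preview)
Your approach is quite different from the paper's. The paper argues directly that a maximal simplex of minimal dimension must contain a maximal curve system ($3g+n-3$ curves), and then counts that exactly $n$ further arcs are needed on the complementary $\chi=-1$ pieces (two on each twice-punctured disc, one on each once-punctured annulus, none on each pair of pants), giving $3g+n-3+n$ vertices. You instead derive the clean formula $a+c = 2n+4g-4+\tfrac{1}{2}(T-P)$ from double-counting and Euler characteristic, and reduce the problem to showing $T-P \ge 2-2g$, equivalently $a\ge n$. The identities and the reduction are correct.

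The gap is exactly where you flagged it, but it is not repairable in the way you suggest: the bound $k_p\ge 2$ is \emph{false}, and maximality does \emph{not} exclude a puncture with a single arc-end. The offending configuration is a \emph{folded triangle}---a triangle two of whose three sides are the two sides of one arc $\alpha$ running from $p$ to some $q\neq p$, the third side being one side of an arc $\gamma$ from $q$ to $q$; its apex $p$ has $k_p=1$. For an explicit maximal simplex containing such pieces, take $S_{0,4}$ with punctures $p,q,r,s$, the curve $c$ separating $\{p,q\}$ from $\{r,s\}$, and in each complementary twice-punctured disc an arc from one puncture to itself encircling the other, together with a radial arc out to that other puncture. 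This five-vertex simplex is maximal (indeed it realises the minimum dimension $4$), has piece-count $(T,A,P)=(2,2,0)$, and has $k_p=k_r=1$. So folded triangles survive the piece classification and maximality; your proposed exclusion fails.

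What your argument actually needs is only the weaker inequality $a\ge n$, and this \emph{is} true (for $S\neq S_{0,1},S_{0,2}$), but it requires a different justification. One route: cut first along only the curves of $\delta$; each component $S'_i$ carrying at least one puncture contains no pants piece, and essentiality of the curves forbids $S'_i$ from being a disc with at most one puncture. An Euler-characteristic count on each such component then gives $a_i\ge n_i$, and summing yields $a\ge n$. With that repair your strategy goes through; as written, the lower bound rests on a false lemma.
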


 \begin{proof}
 The proof is analogous to the proof of Lemma~\ref{lemma:max}.
We use the fact that a maximal simplex that has minimal dimension has a maximal
number of curves,
which is $3g+n-3$. If we cut the surface $S$ along such a collection of curves,
we get a collection of subsurfaces of Euler characteristic $-1$. Each of these
subsurfaces
is either a disc with two punctures, or an annulus with one puncture or a pair of
pants.
On a disc with two punctures, there are systems of two disjoint nonisotopic arcs
(and no systems of three nonisotopic arcs);
on an annulus with one puncture there is only one essential arc;  on a pair of
pants there are no arcs. Hence,
the number of curves and arcs on $S$ forming a maximal simplex of minimal dimension
is $3g+n-3+n= 3g+2n-3$.
The dimension of that simplex is therefore $3g+2n-4$.
 \end{proof}

From Lemmas  \ref{lemma:max} and  \ref{lemma:min}, it follows that the dimension of
maximal simplices in $AC(S)$ is bounded between $3g+2n-4$ and $6g+3n-7$.
 It can be seen, using the arguments of the proof of these lemmas that every
 integer between these two bounds is realized as the dimension of a maximal simplex.

\section{Proof of Theorem \ref{th:auto}}

We now give our first proof of Theorem \ref{th:auto}.
We may assume that the arc complex of the surface is not empty.
Recall that we have the assumption $n\geq 1$. We use the following:

\begin{theorem} [Irmak-McCarthy \cite{IM}] \label{thm:IM}
Let $S$ be a connected orientable
surface of genus $g\geq 0$ with $n\geq 1$ punctures,
which is not homeomorphic to a sphere with one, two or three punctures,
or a torus with one puncture. Then the natural map
$\mathrm{Mod}^*(S)\to \mathrm{Aut}(A(S))$ is an isomorphism. Furthermore, in the
excluded cases, the natural map $\mathrm{Mod}^*(S)\to \mathrm{Aut}(A(S))$ is
surjective and its kernel is the center of $\mathrm{Mod}^*(S)$.
 \qed
\end{theorem}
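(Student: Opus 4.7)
The plan is to mimic Ivanov's template for the curve complex, proving injectivity and surjectivity of the natural map $\rho\colon\mathrm{Mod}^*(S)\to\mathrm{Aut}(A(S))$ separately. For injectivity, fix an ideal triangulation $\Delta$ of $\overline{S}$, a maximal-dimensional simplex of $A(S)$ in the sense of Lemma~\ref{lemma:max}. If $[f]\in\ker\rho$, then $f$ fixes the isotopy class of each arc of $\Delta$, so a suitable representative preserves $\Delta$ setwise and hence permutes its ideal triangles. Comparing with a triangulation $\Delta'$ obtained from $\Delta$ by a single flip pins down this permutation, and a standard argument forces $f$ to be the identity outside the listed exceptional surfaces; in those cases a direct inspection identifies $\ker\rho$ with the centre of $\mathrm{Mod}^*(S)$.

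Surjectivity is the substantive step. Given $\phi\in\mathrm{Aut}(A(S))$, we first establish a combinatorial dictionary that reads off, purely from the link $\mathrm{lk}(\alpha)$ in $A(S)$, the topological type of an arc $\alpha$: in particular whether the two endpoints of $\alpha$ coincide, and the homeomorphism type of the surface $S_\alpha$ obtained by cutting $\overline{S}$ along $\alpha$. Whether the endpoints coincide should be detected by the number and dimensions of maximal simplices of $A(S)$ through $\alpha$, which differ in the two cases via an Euler-characteristic count analogous to Lemma~\ref{lemma:max}. The finer homeomorphism type is then read off from the full simplicial isomorphism type of $\mathrm{lk}(\alpha)$, itself naturally identified with an arc complex on $S_\alpha$; an induction on $|\chi(S)|$ lets us appeal to the statement of the theorem on smaller pieces. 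It follows that $\phi$ sends ideal triangulations to ideal triangulations and preserves the flip relation between them.

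To finish, fix an ideal triangulation $T_0$. Since $\phi(T_0)$ is an ideal triangulation with the same triangle-adjacency pattern as $T_0$, there is $h\in\mathrm{Mod}^*(S)$ with $h(T_0)=\phi(T_0)$ arc by arc; replacing $\phi$ by $\rho(h)^{-1}\circ\phi$, we may assume $\phi$ fixes $T_0$ pointwise. Since any two ideal triangulations are joined by a finite sequence of flips and $\phi$ preserves flips, an induction on the length of such a sequence shows that $\phi$ fixes every vertex of $A(S)$, whence the original $\phi$ equals $\rho(h)$. The main obstacle will be the combinatorial detection of topological type in the previous paragraph, namely distinguishing arcs whose cut surfaces have the same Euler characteristic but different distributions of genus and punctures across components; this requires a delicate refinement of the counting in Lemmas~\ref{lemma:max} and \ref{lemma:min} applied inside each link, together with a careful base case for the induction. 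The exceptional surfaces need separate analysis: on $S_{1,1}$ the arc complex is the Farey graph and its extra simplicial involution is realized by the hyperelliptic involution generating the centre of $\mathrm{Mod}^*(S_{1,1})$, while on the thrice-punctured sphere the three-vertex arc complex is fixed by the central hyperelliptic involution.
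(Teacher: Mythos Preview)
The paper does not prove this statement at all: Theorem~\ref{thm:IM} is quoted from Irmak--McCarthy \cite{IM} and closed with a \qed\ immediately after its statement. It is used as a black box in the first proof of Theorem~\ref{th:auto}. So there is no ``paper's own proof'' to compare your proposal against; you are attempting to reprove an external result that the authors simply cite.

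That said, a couple of remarks on the sketch itself. Your inductive step identifies $\mathrm{lk}(\alpha)$ with ``an arc complex on $S_\alpha$'' and then invokes the theorem on smaller pieces; but $S_\alpha$ is a surface with nonempty boundary (the cut-open arc), whereas the theorem as stated, and the arc complex as defined in this paper, concern punctured surfaces without boundary. You would need to set the induction up for a broader class (surfaces with boundary and punctures, arcs allowed to end on boundary), which is exactly the generality Irmak--McCarthy work in. Also, your description of the exceptional cases is off: the arc complex of the thrice-punctured sphere has six vertices, nine edges and four $2$-cells (see Figure~\ref{pantalon}), not three vertices; and for $S_{1,1}$ the point is not that the hyperelliptic involution ``realizes an extra simplicial involution'' but that it acts \emph{trivially} on $A(S_{1,1})$ and hence lies in $\ker\rho$.
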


For the proof of Theorem~\ref{th:auto}, let $\Phi
:\mathrm{Mod}^*(S)\to \mathrm{Aut}(AC(S))$ denote the natural map.
We define a natural map $\mathrm{Aut}(AC(S))\to
\mathrm{Aut}(A(S))$ as follows: Let $\varphi$ be an automorphism
of $AC(S)$. If $a$ is an arc on $S$, then $a$ is contained in a
maximal simplex $\Delta$ of dimension $6g+3n-7$. (Here and in the
sequel, we do not distinguish between an arc and its isotopy
class. Likewise, we do not distinguish between a curve and its
isotopy class.) Since $\varphi:AC(S)\to AC(S)$ is a simplicial
automorphism, the image simplex $\varphi(\Delta)$ is also of
dimension $6g+3n-7$ containing $\varphi(a)$. By
Lemma~\ref{lemma:max}, $\varphi(a)$ is an arc. It follows that
$\varphi$ maps arcs to arcs. Since the inverse of $\varphi$ also
maps arcs to arcs, the restriction of $\varphi$ to arcs gives an
automorphism  $\tilde {\varphi}:A(S)\to A(S)$. It is easy to see
that the resulting map  $\varphi \mapsto \tilde {\varphi}$ is a
homomorphism, which we denote by $\Psi : \rm{Aut}(AC(S)) \to
\rm{Aut}(A(S))$.

Let $\Theta :\mathrm{Mod}^*(S)\to
\mathrm{Aut}(A(S))$ be the natural map, which is an isomorphism by
Theorem~\ref{thm:IM}. Clearly, we have the equality $\Psi\circ  \Phi =\Theta$,
deducing that $\Phi$ is one-to-one and $\Psi$ is onto.
\begin{center}
\unitlength=1mm
\begin{picture}(70,20)(0,5)
\put(20,22){\makebox(0,0){$\mathrm{Mod}^*(S)$}}
\put(20,6){\makebox(0,0){$\mathrm{Aut}(AC(S))$}}
\put(58,13){\makebox(0,0){$\mathrm{Aut}(A(S))$}}
\put(20,19){\vector(0,-1){10}}
\put(30,21){\vector(3,-1){17}}
\put(32,6){\vector(3,1){15}}
\put(23,14){\makebox(0,0){$\Phi$}}
\put(40,11){\makebox(0,0){$\Psi$}}
\put(40,21){\makebox(0,0){$\Theta$}}
\end{picture}
\end{center}

Let $\varphi$ be an automorphism of $AC(S)$ such that $\varphi$
fixes each arc. We claim that $\varphi$ is the identity. This will show that
$\Psi$ is one-to-one, concluding the proof that the natural map $\Phi$
is onto.

Let $b$ be a curve on $S$. It suffices to show that
$\varphi (b)=b$. Suppose first that $b$ is nonseparating. It is easy to see that
there is a maximal simplex $\Delta$ in $AC(S)$
containing $b$ such that all elements of
$\Delta'=\Delta-\{ b\}$ are arcs and that $b$ is the only curve
disjoint from all elements of $\Delta'$.
Since the automorphism $\varphi$ fixes each element
$\Delta'$ and $\varphi (b)$ is a curve disjoint from
all elements of $\Delta'$, we have $\varphi (b)=b$.
If $c$ is a separating curve on $S$, then it is easy to find
a maximal simplex $\Sigma$ in $AC(S)$ containing $c$ such that
each element of $\Sigma'=\Sigma-\{c\}$ is either a nonseparating curve or an arc,
and that $c$ is the only curve disjoint from all elements of $\Sigma'$.
Since $\varphi$ fixes each element $\Sigma'$, it must fix $c$ as well.

This completes the first proof of Theorem \ref{th:auto}.

\bigskip

We now give our second proof of Theorem \ref{th:auto}, using the curve
complex instead of the arc complex. We use the following:

\begin{theorem} [Ivanov-Korkmaz-Luo \cite{ivanov,korkmaz,luo}] \label{thm:complex}
Let $S$ be a surface of genus $g$ with $n$ punctures. If $2g+n\geq
5$ then the natural map $\mathrm{Mod}^*(S)\to \mathrm{Aut}(C(S))$ is an
isomorphism.
 \qed
\end{theorem}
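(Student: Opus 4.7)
The plan is to prove both injectivity and surjectivity of the natural map $\Theta\colon \mathrm{Mod}^*(S)\to\mathrm{Aut}(C(S))$ under the hypothesis $2g+n\geq 5$.

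Injectivity reduces to showing that a mapping class $f$ fixing every vertex of $C(S)$ is trivial. I would fix a pants decomposition $P=\{c_1,\dots,c_{3g+n-3}\}$ together with ``dual'' curves $d_i$ meeting each $c_i$ minimally. A representative of $f$ can be isotoped so as to preserve $P$ setwise; since each complementary pair of pants carries no essential curves, the restriction of $f$ to the pants decomposition is trivial up to Dehn twists along the $c_i$, and the requirement that $f$ also fix each $d_i$ kills these twists. The hypothesis $2g+n\geq 5$ is used precisely to rule out the hyperelliptic involution, which is the only nontrivial element that acts trivially on $C(S)$ in the borderline low-complexity cases.

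The heart of the argument is surjectivity. Given $\varphi\in\mathrm{Aut}(C(S))$, the aim is to build a self-homeomorphism of $S$ inducing $\varphi$, in three layers. First, show that $\varphi$ preserves the topological type of every vertex: separating versus nonseparating, and, for separating curves, the unordered pair of topological types of the two complementary pieces. The key input is that the link of a vertex $c$ is, up to a join factor, the curve complex of $S\setminus c$, so combinatorial invariants of the link (its dimension, its decomposability as a join, the dimensions of the join factors) detect the topology of $S\setminus c$. Second, show that $\varphi$ preserves enough local structure: pants decompositions, pairs of curves meeting exactly once (characterized by the pattern of curves simultaneously disjoint from both), and the ``elementary moves'' that relate two pants decompositions differing in a single curve. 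Third, assemble these data into a homeomorphism: transport a pants decomposition $P$ to $\varphi(P)$ via some $h_0$, then correct $h_0$ by Dehn twists along components of $\varphi(P)$ so that its action on a dual system agrees with $\varphi$, and finally verify that the resulting $h$ and $\varphi$ coincide on all of $C(S)$. The last verification uses the fact that a pants decomposition together with a dual system rigidifies $C(S)$, which is essentially the injectivity already established.

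The main obstacle is the first layer: the combinatorial recognition of topological types of vertices, since separating curves whose complements have nearby genus or puncture counts must be told apart by subtle link-theoretic invariants, and the argument splits into many cases according to $(g,n)$. A closely related difficulty is the purely combinatorial characterization of the relation ``two curves meet exactly once,'' which is what glues the local data into a global homeomorphism; this is where the hypothesis $2g+n\geq 5$ is used essentially, since in the excluded small cases (four-holed sphere, once-punctured torus and their close relatives) either $C(S)$ is discrete and one must redefine it, as in the paper, or extra automorphisms exist that are not induced by $\mathrm{Mod}^*(S)$.
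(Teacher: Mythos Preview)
The paper does not prove this theorem. Theorem~\ref{thm:complex} is stated with attribution to Ivanov, Korkmaz and Luo and closed immediately with a \qedsymbol; it is invoked as a black box in the second proof of Theorem~\ref{th:auto}. There is therefore no proof in the paper to compare your proposal against.

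For what it is worth, your sketch is a reasonable outline of the strategy actually used in the cited references: combinatorial recognition of topological types of vertices via link invariants, preservation of pants decompositions and of minimal-intersection pairs, and reconstruction of a homeomorphism from the image of a pants decomposition together with a dual system. The identification of the main obstacle (the case analysis needed to distinguish topological types of separating curves, and the combinatorial characterization of geometric intersection one) is also accurate. But none of this appears in the present paper, which simply quotes the result.
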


Note that the hypothesis (and therefore the second proof of Theorem \ref{th:auto}
that we give below) excludes  the cases where the surface $S$ is the sphere with
four punctures or the torus with two punctures, but these cases were taken care in
the first proof.

For the second proof of Theorem~\ref{th:auto}, let $\Phi:
\mathrm{Mod}^*(S)\to \mathrm{Aut}(AC(S))$ be the natural map, as above.
If $\varphi$ is an automorphism of $AC(S)$, it maps arcs to arcs and
curves to curves, as shown in the first proof above.
Hence, $\varphi$ induces an automorphism
$\tilde{\varphi} :C(S) \to C(S)$, giving a homomorphism
$\Psi: \mathrm{Aut}(AC(S))\to \mathrm{Aut}(C(S))$ defined by
$\varphi\mapsto \tilde{\varphi}$.

Let $\Theta:\mathrm{Mod}^*(S)\to \mathrm{Aut}(C(S))$ be the natural homomorphism,
which is an isomorphism by Theorem~\ref{thm:complex}. It is easy to see that
$\Psi \circ \Phi =\Theta$, deducing that $\Phi$ is one-to-one and $\Psi$ is onto.

\begin{center}
\unitlength=1mm
\begin{picture}(70,20)(0,5)
\put(20,22){\makebox(0,0){$\mathrm{Mod}^*(S)$}}
\put(20,6){\makebox(0,0){$\mathrm{Aut}(AC(S))$}}
\put(58,13){\makebox(0,0){$\mathrm{Aut}(C(S))$}}
\put(20,19){\vector(0,-1){10}}
\put(30,21){\vector(3,-1){17}}
\put(32,6){\vector(3,1){15}}
\put(23,14){\makebox(0,0){$\Phi$}}
\put(40,11){\makebox(0,0){$\Psi$}}
\put(40,21){\makebox(0,0){$\Theta$}}
\end{picture}
\end{center}

Let $\varphi$ be an automorphism of $AC(S)$ such that $\varphi$
fixes each curve. We claim that $\varphi$ is the identity. This will show that
$\Psi$ is one-to-one, concluding the proof that the natural map $\Phi$
is onto.

Let $a$ be an arc on $S$. It suffices to show that $\varphi (a)=a$.

Suppose first that $a$ joins a puncture $P$ to itself and that
both boundary components of a regular neighborhood of $a\cup
\{P\}$ are nontrivial curves. One can easily construct a (finite)
set $\Sigma$ of curves on $S$ such that each element of $\Sigma$
is disjoint from $a$, and that $a$ is the only arc disjoint from
all elements of $\Sigma$. Since $\varphi$ fixes each element of
$\Sigma$ and $\varphi (a)$ is an arc disjoint from all elements of
$\Sigma$, we conclude that $\varphi (a)=a$.

Suppose next that $a$ joins two distinct punctures, say $P$ and $Q$.
One can construct a (finite) set $\Sigma$ consisting of curves and
arcs, as in the previous paragraph, on $S$ such that
each element of $\Sigma$ is disjoint from $a$, and that
$a$ is the only arc disjoint from all elements of $\Sigma$.
Since $\varphi$ fixes each element of $\Sigma$ and $\varphi (a)$ is an
arc disjoint from all elements of $\Sigma$, we get that $\varphi (a)=a$.

Suppose finally that $a$ joins a puncture $P$ to itself, but one of the
boundary components of a regular neighborhood of $a\cup \{P\}$ is trivial,
i.e. bounds a disc with one puncture, say $Q$. Let $D$ be the disk with two
punctures $P$ and $Q$ such that $a$ lies on $D$.
Let $a'$ be the arc joining $P$ and $Q$ on $D$, which is disjoint from $a$.
Let $\Sigma$ be a finite set of arcs and curves on $S$
such that the elements of $\Sigma$ are fixed by $\varphi$, and that
$a$ and $a'$ are the only arcs disjoint from the elements of $\Sigma$.
Since $\varphi$ fixes each
element of $\Sigma\cup \{ a'\}$, we conclude that $\varphi (a)=a$.

This completes the second proof of Theorem~\ref{th:auto}.
\medskip

We conclude this section with a few words on the case of surfaces excluded by the
hypothesis of Theorem~\ref{th:auto}.

In the case of a sphere with one puncture, $AC(S)$ is empty. In the case of a
sphere with two punctures, $AC(S)=A(S)$ is a single point, and in the
case of a sphere with three punctures, $AC(S)=A(S)$ is a finite complex
(see Figure~\ref{pantalon}). In these two special cases,
by Theorem~\ref{thm:IM}
the natural homomorphism from $\mathrm{Mod}^*(S_{g,n})$ to $\mathrm{Aut}(AC(S_{g,n}))$
is surjective and its kernel is $\mathbb{Z}_2=\mathbb{Z}/2\mathbb{Z}$, which is the center of
$\mathrm{Mod}^*(S_{g,n})$. Finally, in the case where $S$ is a torus with one
puncture, the arguments given in the first proof of  Theorem \ref {th:auto}
and the fact that $\mathrm{Mod}^*(S_{g,n})\to \mathrm{Aut}(A(S_{g,n}))$ is
surjective with kernel $\mathbb{Z}_2$ (see
Theorem~\ref{thm:IM}) show that the natural homomorphism
$ \mathrm{Aut}(AC(S_{g,n})) \to \mathrm{Aut}(A(S_{g,n}))$ is an isomorphism.
This shows that we have an isomorphism
$\mathrm{Mod}^*(S_{g,n})/\mathbb{Z}_2\simeq \mathrm{Aut}(AC(S))$.

\begin{figure}[!hbp]
\centering
\includegraphics[width=0.9\linewidth]{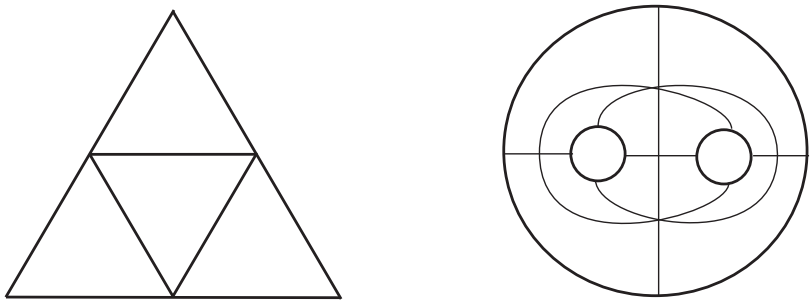}
\caption{\small{The figure on the left hand side represents the arc complex of
the sphere with three punctures.
It is a finite simplicial complex, having six vertices, nine edges and four
two-cells. The vertices of this complex are
the isotopy classes of the arcs represented in the right hand side figure,
which represents a three-punctured sphere in
which the punctures have been replaced
by circles, for more visibility. }}
\label{pantalon}
\end{figure}

\section{Proof of Theorem \ref{th:distinct}}

In this section, we consider the following classification of the vertices of
$AC(S)$ into five types.  A representative on $S$ of a vertex is of one of the
following types:
\begin{enumerate}
 \item \label{c1} a separating closed curve;
 \item  \label{c2} an arc connecting a puncture to itself that is
  separating the surface;
 \item  \label{c3} a nonseparating closed curve;
 \item  \label{c4} an arc connecting a puncture to itself that is
  nonseparating the surface;
\item  \label{c5} an arc connecting two distinct punctures.
\end{enumerate}

The stars and the
links of vertices will be used to distinguish combinatorially these types of
vertices from each other. For the convenience of the reader, we recall the
definition of the star and link of a vertex in a simplicial complex, which will
be useful below. Given a vertex $v$ in an abstract simplicial
complex $E$, its \textit{star}, $\mathrm{St}(v)$, is the subcomplex of $E$ whose
simplices are the simplices of $E$ that are of the following types: (1)
the simplices of $E$ that contain $v$, and (2) the faces of such simplices.
The \textit{link} of $v$,  $\mathrm{Lk}(v)$, is  then the subcomplex of $E$
whose simplices are
those simplices of  $\mathrm{St}(v)$ whose intersection with $v$ is empty.

We now prove Theorem~\ref{th:distinct}.

In the above classification of vertices of $AC(S)$, Cases  (\ref{c1}) and
(\ref{c2}) can be distinguished from the three others by the fact that
the link of a vertex in any one of these two cases is the join of two nonempty
subcomplexes.
Equivalently, the dual link, in each of these two cases, is the disjoint union of
two nonempty sets.
Here, by the {\it dual link} $L^*$ of a link $L$, we mean an abstract graph whose
vertices are the vertices of $L$,
and two vertices of $L^*$ are connected by an edge if and only if these vertices
are not connected by an edge when they are considered as vertices in $L$. In cases
(\ref{c3})-(\ref{c5}), the dual link is connected.

Case  (\ref{c1}) can be distinguished from Case (\ref{c2}) by the fact that
the highest dimension of a maximal simplex of a vertex in Case
(\ref{c2}) is bigger
than the  corresponding dimension  for Case  (\ref{c1}). (In fact, this
dimension
is $6g+3n-8$ in Case (\ref{c1}) and $6g+3n-7$ in Case (\ref{c2}).)

Thus, each of Cases  (\ref{c1}) and (\ref{c2}) is completely distinguished from all the
other cases.

Now, we deal with the remaining three cases. Case (\ref{c3}) is
distinguished from the other two by looking at the highest dimension
of maximal simplices containing such a vertex. The highest dimension of
any maximal simplex containing a vertex in Case (\ref{c3}) is $6g+3n-8$,
whereas in the other two cases this dimension is $6g+3n-7$.

It remains to distinguish Case (\ref{c4}) from Case (\ref{c5}). This is done by the
following property. Consider on the closed  surface $\overline{S}$ a closed curve $Z$ which
is the boundary of a regular neighborhood of an arc joining the two distinguished points,
representing a vertex in Case (\ref{c5}), which we call $a$. The vertex $z$ represented
by the curve $Z$ is in $\mathrm{Lk}(a)$ and it
has the following property: $z$ is represented by a
separating curve and $\mathrm{St}(z)\subset  \mathrm{St}(a)$.
Note that we already distinguished the vertices in $AC(S)$ represented by arcs
from those represented by closed curves by combinatorial information at the links of
these vertices; likewise, we already distinguished combinatorially
the vertices represented by separating closed curves from those represented by non-separating
curves. Therefore, the property stated is a combinatorial property of the link of $a$.
Now in the link of a vertex $a$ of Type  (\ref{c4}), there is no vertex $x$ represented by a
separating curve $X$ satisfying   $\mathrm{St}(x)\subset  \mathrm{St}(a)$.
Indeed, if $X$ is any separating curve on $S$ disjoint from an arc $A$
representing the vertex $a$, then  $A$ is in one of the two connected
 components of $S\setminus X$. This component has positive genus
 (since $A$ is also non-separating in that component), therefore we can find a closed
 curve $Y$ in $S\setminus X$ that intersects $A$ in exactly one point. Now the vertex
 $y$ represented by $Y$ is in $\mathrm{St}(x)$, but not in $\mathrm{St}(a)$,
 showing that $\mathrm{St}(x)$ is not a subset of $\mathrm{St}(a)$.
This completes the proof of the theorem.

\medskip

 Theorem \ref{th:distinct} shows that not only a full automorphism of the
 complex $AC(S)$
 recognizes the topological types of the vertices, as is implied by
 Theorem~\ref{th:auto},
 but that also that local combinatorial data contained in the links of each
 vertex are sufficient to characterize these types.

\section{Proof of Theorem~\ref{th:quasi}}
We equip  $C(S)$ and $AC(S)$ with the simplicial metrics recalled in the introduction.
Let $C^0(S)$ be the set of vertices of $C(S)$. Since the dimension of simplices in
the complex $AC(S)$ is uniformly
bounded, the inclusion map $C^0(S)\to C(S)$ is a quasi-isometry.
Thus, in order to show that the natural map $C(S)\to AC(S)$ is a
quasi-isometry, it suffices to show that the natural map
$C^0(S)\to AC(S)$ is a quasi-isometry. Thus, it suffices to reason
on distances between vertices of $C(S)$.

 We let $C^0(S)\to AC(S)$ be the natural inclusion map,
  and we let $d_C$ (respectively $d_{AC}$) denote the distance function in $C(S)$
  (respectively $AC(S)$). We identify $C^0(S)$ with its image in $AC(S)$.

  To see that every point in $ AC(S)$ is at distance at most one from a point
  in $C(S)$, we consider, for each essential arc $A$ on $S$, one of
  the boundary
  components of a regular neighborhood of $A$. Since the surface $S$ is not a
  sphere with at most three
  punctures, we can choose this boundary component to be essential. The isotopy
  class of such an essential simple closed curve is a vertex in $AC(S)$ which is at
  distance one from the vertex represented by the arc $A$.

  For the proof of the first claim in Theorem~\ref{th:quasi}, we
  prove that for every pair of vertices $x$ and $y$ in $C(S)$, we have
  \begin{eqnarray} \label{ineq:1}
    \frac{1}{2} \ d_C(x,y) \leq  d_{AC}(x,y)\leq d_C(x,y)
  \end{eqnarray}
 if $2g+n\geq 5$,
 \begin{eqnarray} \label{ineq:2}
   d_C(x,y)\leq d_{AC}(x,y) \leq  d_C(x,y) +2
 \end{eqnarray}
   if $(g,n)=(1,1)$, and
  \begin{eqnarray} \label{ineq:3}
   \frac{1}{2}\ d_C(x,y) \leq   d_{AC}(x,y)\leq d_C(x,y)+2
  \end{eqnarray}
 if $(g,n)=(0,4)$.

This will complete the proof of Theorem~\ref{th:quasi}.

 If $x=y$ then clearly all inequalities hold. So we assume that $x$ and $y$ are
 distinct.

 Suppose first that $2g+n\geq 5$.
 The second inequality in~(\ref{ineq:1}) is clear from the definitions.
 We prove the first one.

 Suppose that $ d_{AC}(x,y)=L$.
 Consider a simplicial path $p$ of length $L$ in $AC(S)$ joining $x$
 and $y$.
 We show that we can find a simplicial path $p'$ of length $\leq 2L$ in $AC(S)$
 joining these two vertices such that all the vertices of $p'$ are
 in $C(S)$.
 The construction of the path $p'$ is by induction. We describe the first step,
 and the induction step is similar.

  \medskip

\begin{figure}[!hbp]
\psfrag{1}{(i)}
\psfrag{2}{(ii)}
\psfrag{3}{(iii)}
\psfrag{4}{(iv)}
\psfrag{5}{(v)}
\psfrag{6}{(vi)}
\psfrag{7}{(vii)}
\psfrag{8}{(viii)}
\psfrag{9}{(ix)}
\centering
\includegraphics[width=0.9\linewidth]{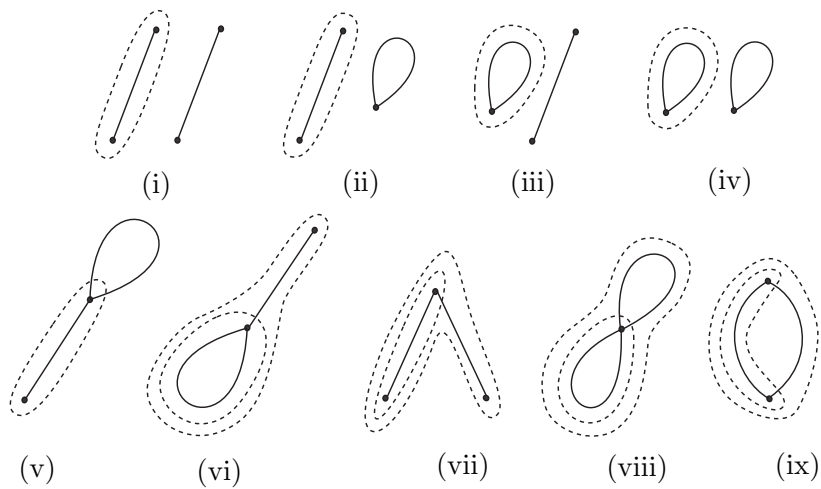}
\caption{\small{These nine cases, used in the proof of Theorem \ref{th:quasi},
are the various possibilities for an ordered pair of consecutive vertices
of $AC(S)$
that represent by arcs on $S$. The dashed lines are closed curves used to
replace the arcs.}}
\label{arcs}
\end{figure}

 There are three cases.

 \medskip

\noindent  {\it Case 1}. The first edge of the simplicial path $p$
 joins the vertex $x$ to a vertex $z$
 represented by a simple closed curve in $S$. In this case, we do not change this
 part of the path $p$, and we go to the next step, that is, we examine the part of
 the path $p$ that starts at the second vertex, $z$.

 \medskip

\noindent  {\it Case 2}. The first edge of the path $p$ joins the vertex
 $x$ to a vertex $a$
 represented by an arc $A$ in $S$, and the second edge joins the vertex $a$
 to a vertex $z$ represented by
 a curve. Note that $x$ is not connected to $z$
 by an edge. Note also that a tubular neighborhood $N$ of $A$ is either
 a disk with two punctures or
 an annulus with one puncture. Since $2g+n\geq 5$, one of the boundary
 components, say $V$, of $N$
 is a simple closed curve not bounding a disk with at most one puncture.
 In this case,
 let $v$ be the isotopy class of $V$. Note that $v$ is a vertex of $C(S)$
 which is at distance
 one from $x$ and from $z$. We replace the path
 $x a z$ by the path $xvz$.
 Such a step does not change the length of the path joining $x$ to $y$.

 \medskip

  \noindent  {\it Case 3}.  The first edge of the path $p$ joins the vertex
  $x$
  to a vertex $a$ represented by an arc $A$ on $S$, and the second edge joins  the
  vertex $a$ to a vertex $b$
  that is also represented by an arc $B$. We may assume that $A$ and $B$ are
  disjoint.
  In this case, we distinguish nine subcases, depending on the relative position of
  $A$ and $B$ on the
  surface $S$. These cases are represented in Figure~\ref{arcs},
  (i) to (ix). In each case, we replace the path $x a b$ by either a path of
  the same length,
  $x z b$, or a path of length three, $x z v b$. Here, $z$ is the
  isotopy class of a
  nontrivial boundary component of $A$, and $v$ is the isotopy class of a
  nontrivial boundary component of $A\cup B$. The curves
  are represented in dashed lines in Figure~\ref{arcs}, and one can check in each
  case that
  either a vertex $z$ or a pair of adjacent vertices $z$ and $v$ with the
  appropriate
  requirements exists. Note that this also uses fact that the surface $S$ is not a
  sphere with at most three punctures.

  Thus we get a simplicial path $p'$ between $x$ and $y$ of length $\leq 2L$
  such that all all vertices are in $C(S)$.
  This concludes the proof of~(\ref{ineq:1}).

  Suppose now that $(g,n)=(1,1)$. For each vertex $z$ in $AC(S)$
  represented by a curve, there is a unique vertex $z'$ represented by an arc
  such that $zz'$ is an edge in $AC(S)$.
  Conversely, for each vertex $z'$ in $AC(S)$ represented by an arc
  there is a unique vertex $z$ represented by a curve such that
  $z'z$ is an edge in $AC(S)$.
  From this, it is easy to see that for any two vertices $z$ and $v$ represented by
  curves
  \begin{enumerate}
    \item [(i)]
    there are no simplicial paths of form $zv$, or $zav$, or $azb$ in $AC(S)$, where
    $a$ and $b$ are represented by arcs, and
    \item [(ii)] if there is a simplicial path $zabv$ in $AC(S)$ then
  $d_C(z,v)=1$.
  \end{enumerate}

  Suppose that $d_C(x,y)=L$. Since $x$ and $y$ are distinct,
  we have $L\geq 3$ by~(i). If $ xx_1x_2\cdots x_{L-1}y $
  is a path of length $L$ in $C(S)$, so that adjacent vertices have representatives
  intersecting only once, then
  $ x x' x'_1 x'_2 \cdots x'_{L-1} y' y$ is a path of length $L+2$ in $AC(S)$,
  showing that $d_{AC}(x,y)\leq d_C(x,y)+2$.

  Suppose now that $d_{AC}(x,y)=L$. Let $p$ be a simplicial path of length $L$
  in $AC(S)$ joining $x$ and $y$. By~(i),
  the vertices of $p$ other than $x$ and $y$ are all
  represented by arcs. Thus the
  path $p$ must be of the form $xx'x'_1\cdots x'_{L-3}y'y$. Then by~(ii), the path
  $xx_1x_2\cdots x_{L-3}y$ is of length $L-2$ in $C(S)$, showing that
  $d_C(x,y)\leq d_{AC}(x,y)-2$. This finishes the proof of~(\ref{ineq:2}).

  Suppose finally that $(g,n)=(0,4)$. For each vertex $a$ in $AC(S)$
  represented by an arc, there is a unique vertex $v_a$ represented by a curve
  such that $av_a$ is an edge in $AC(S)$.
  For each vertex $z$ in $AC(S)$ represented by a curve,
  there are two vertices $a$ and $b$ connected to
  $z$ by an edge such that representatives of each of $a$ and $b$ joins
  different punctures. Let $z'$ be any one of them.
  Note that if $z$ and $v$ are connected by an edge in $C(S)$, so that
  they have representatives intersecting twice, then
  $zz'v'v$ is a simplicial path in $AC(S)$ for any choice of $z'$ and $v'$.
  It is now easy to see that for any two vertices $z$ and $v$ represented by
  curves
  \begin{enumerate}
    \item [(iii)] there are no simplicial paths of the form $zv$, or $zav$ in $AC(S)$,
      where $a$ is
      represented by an arc, and
    \item [(iv)] if there is a simplicial path $zabv$ in $AC(S)$ then
      $d_C(z,v)$ is equal to $1$ or $2$.
  \end{enumerate}

  Suppose that $d_C(x,y)=L$. Since $x$ and $y$ are distinct,
  we have $L\geq 3$.
  Let $ xx_1x_2\cdots x_{L-1}y $ be a path of length
  $L$ in $C(S)$, so that adjacent vertices have representatives
  intersecting twice. Then $ x x' x'_1 x'_2 \cdots x'_{L-1} y' y$
  is a path of length $L+2$ in $AC(S)$, showing that $d_{AC}(x,y)\leq d_C(x,y)+2$.

  Suppose now that $d_{AC}(x,y)=L$. Let $p$ is a simplicial path of length $L$
  in $AC(S)$ joining $x$ to $y$. By~(iii), we have $L\geq 3$. If there is a part of $p$
  of the form $azb$, then by changing $z$ with $z'$ for a suitable choice,
  we can assume that all vertices of $p$ other than $x$ and $y$ are represented by arcs.

  Thus the path $p$ must be of the form $xa_1a_2\cdots a_{L-1}y$, where each $a_i$
  is represented by an arc. If $v_i$ denotes
  the vertex $v_{a_i}$, we get a sequence of vertices $x,v_2,v_3,\cdots ,v_{L-2},y$
  in $C(S)$. Since the distance between adjacent vertices in this sequence
  is at most $2$ by~(iv), we get that $d_C(x,y)\leq 2(L-2)$, concluding that
  $\frac{1}{2}\ d_C(x,y)+2\leq d_{AC}(x,y)$.

  This finishes the proof of~(\ref{ineq:3}), concluding the proof of
  Theorem~\ref{th:quasi}.

  \medskip

  \noindent {\it Acknowledgments} The authors thank A. Hatcher and S. Schleimer for
  pointing out the references \cite{Hatcher} and \cite{Schleimer} after a first
  version of this paper was written.

\end{document}